\newtheorem{lemma}{Lemma}[section]
\newtheorem{corollary}{Corollary}[section]
\newtheorem{theorem}{Theorem}[section]         
\theoremstyle{definition}
\renewenvironment{proof}{\textbf{Proof:}}{$\Box$}
\numberwithin{equation}{section}
\begin{document}

\title{
Existence of optimizers of the Stein-Weiss inequalities on Carnot groups
\thanks{This work was supported by the National
Natural Science Foundation of China (Grant No. 11271299), Natural Science Foundation
Research Project of Shaanxi Province (Grant No. 2012JM1014). Corresponding Author: Pengcheng Niu, pengchengniu@nwpu.edu.cn}
}


\author{Tingxi Hu, Pengcheng Niu\\Department of Applied Mathematics,\\Northwestern Polytechnical University, Xi'an 710129, China}





\maketitle
\begin{abstract}
This paper proves existence of optimizers of the Stein-Weiss inequalities on Carnot groups under some conditions.
The adjustment of Lions' concentration compactness principles to Carnot groups plays an important role in our proof.
Unlike known treatment to the Hardy-Littlewood-Sobolev inequality on Heisenberg group,
our arguments relate to the powers of the weight functions.

\textbf{Keywords:}\;Carnot group; Stein-Weiss inequality; maximizing sequence; concentration compactness principle.

\textbf{MSC2010:}\;35R03; 39B62; 49J45.
\end{abstract}



\section{Introduction}

\subsection{Classic Hardy-Littlewood-Sobolev inequality and Stein-Weiss inequality}
     The well known Hardy-Littlewood-Sobolev inequality on $\mathbb{R}^N$ (short for HLS inequality) is of the form
\begin{equation}
\left| {\iint_{\mathbb{R}^N  \times \mathbb{R}^N } {\frac{{\overline {f\left( x
\right)} g\left( y \right)}} {{\left| {x - y} \right|^\lambda
}}dxdy}} \right| \leqslant C_{r,\lambda ,N} \left\| f \right\|_r
\left\| g \right\|_s,
\end{equation}
where $1 < r,s < \infty ,0 < \lambda  < N$ and $\frac{1} {r} +
\frac{1} {s} + \frac{\lambda } {N} = 2$.
$C_{r,\lambda ,N}$ is a positive constant independent of $f$ and $g$, $\left\| f \right\|_p$ denotes the $L^p(\mathbb{R}^N)$
norm of $f$.

This inequality was first proved by Hardy and Littlewood [5, 6] in
$\mathbb{R}^1$ and extended by Sobolev [15] to $\mathbb{R}^N$.
Lieb [11] classified all the optimizers of (1.1) on
$\mathbb{R}^N$ and obtained the sharp constant of this inequality
in the special case $ r = s = 2N/(2N - \lambda )$. Existence of optimizers of (1.1)
was also investigated by Lions [13], which is an application of the
concentration compactness principle.

The weighted HLS inequality, i.e. Stein-Weiss inequality, derived by
Stein and Weiss [7] on $\mathbb{R}^N$ which reads
\begin{equation}
\left| {\iint_{R^N  \times R^N } {\frac{{\overline {f\left( x
\right)} g\left( y \right)}} {{\left| x \right|^\alpha  \left| {x -
y} \right|^\lambda  \left| y \right|^\beta  }}dxdy}} \right|
\leqslant C_{\alpha ,\beta ,r,\lambda ,N} \left\| f \right\|_r
\left\| g \right\|_s ,
\end{equation}
where $1 < r,s < \infty ,0 < \lambda  < N,\alpha  + \beta \geqslant
0,\lambda  + \alpha  + \beta  \leqslant N,\alpha  < N/r'$ ($ r' =
r/(r - 1)$), and $\beta < N/s'$ ($s' = s/(s - 1)$), such that $
\frac{1} {r} + \frac{1} {s} + \frac{\lambda } {N} = 2$, $C_{\alpha
,\beta ,r,\lambda ,N}$ is a positive constant independent of $f,g$.

Recently, Han, Lu and Zhu [7] gave two classes of Stein-Weiss inequalities
on the Heisenberg group and claimed that these inequalities hold
in stratified groups. The authors [9] built the Stein-Weiss type
inequalities on Carnot groups, which supports the opinion of Han, Lu and Zhu.
Readers can also see [10] for results concerned.

Han and Niu [8] derived existence
of optimizers of the Hardy-Sobolev inequalities on the H-type group, which
applied a generalization of Lions' concentration compactness principles. On the other
hand, Han [4] furnished a proof of existence of optimizers of the
HLS inequality on the Heisenberg group in which the concentration
compactness principle plays an important role too. These inspire us to consider the related problems on
the Stein-Weiss inequalities on Carnot groups.

\subsection{Structure of Carnot group and the Stein-Weiss inequality}

We begin by describing Carnot group. For more information, we refer
to [2, 3, 14]. A Carnot group $G$ of step $r$ is a simply
connected nilpotent Lie group such that its Lie algebra
$\mathfrak{g}$ admits a stratification
$$ \mathfrak{g }= V_1  \oplus V_2 \oplus \ldots  \oplus V_r  =  \oplus _{l = 1}^r V_l,$$
in which $ [V_1 ,V_l ] = V_{l + 1} \;\;\left( {l = 1,2, \ldots ,r
-1} \right)$ and $ [V_1 ,V_r ] = \left\{ 0 \right\}$.

Denoting $ m_l  = \dim V_l$,we fix on $G$ a system of coordinates $u
= \left( {z_1 ,z_2 , \ldots ,z_r } \right), z_l  \in \mathbb{R}^{m_l }. $
Every Carnot group $G$ is naturally equipped with a family of
non-isotropic dilations
$$\delta _r \left( u \right) = \left( {rz_1 ,r^2 z_2 , \ldots ,r^r z_r
} \right)\;\;,\forall u \in G,\forall r > 0,$$ the homogeneous
dimension of $G$ is given by $Q = \sum\limits_{l = 1}^r {lm_l }$. We
denote by $du$ a fixed bi-invariant Haar measure on $G$. One easily
sees $ \left( {d \circ \delta _r } \right)\left( u \right) = r^Q du.$
The group law given by Baker-Campbell-Hausdorff formula is
$$uv = u + v + \sum\limits_{1 \leqslant l,k \leqslant r} {Z_{l,k}
\left( {u,v} \right)} \;,\;\;\forall u,v \in G,$$ where each
$Z_{l,k} \left( {u,v} \right)$ is a fixed linear combination of
iterated commutators containing $l$ times $u$ and $k$ times $v$. The
homogenous norm of $u$ on $G$ is defined by $$ \left| u \right| =
\left( {\sum\limits_{j = 1}^r {\left| {z_j } \right|^{\frac{{2r!}}
{j}} } } \right)^{\frac{1} {{2r!}}} ,$$ where $ \left| {z_j }
\right|$ denotes the Euclidean distance from $ z_j  \in \mathbb{R}^{m_j }$ to
the origin in $\mathbb{R}^{m_j }$. Such homogenous norm on $G$ can be used to define a
pseudo-distance $d\left( {u,v} \right) = \left|
{u^{ - 1} v} \right|$ on $G$.

Denote the pseudo-ball of radius $r$ centered at $u$ by $ B\left( {u,r}
\right) = \left\{ {v \in G|d\left( {u,v} \right) < r} \right\} ,$
and the pseudo-ball centered at the origin by $ B_r $ or $\left\{ {\left|
u \right| < r} \right\} $.

The Stein-Weiss type inequality on the Carnot group $G$ states as below (see [9] or [10]).

Let $ 1 < r,s < \infty ,0 < \lambda  < Q, \alpha  + \beta \geqslant
0, \alpha  < Q/r',(r' = r/(r - 1)), \lambda  + \alpha  + \beta
\leqslant Q, $ and $ \beta  < Q/s',(s' = s/(s - 1)), $ such that $
\frac{1} {r} + \frac{1} {s} + \frac{{\lambda  + \alpha  + \beta }}
{Q} = 2$, then there exists a positive constant $C_{\alpha ,\beta
,r,\lambda ,G} $ independent of $f,g$ such that
\begin{equation}
\left| {\iint_{G \times G} {\frac{{\overline {f\left( u \right)}
g\left( v \right)dudv}} {{\left| u \right|^\alpha  \left| {u^{ - 1}
v} \right|^\lambda  \left| v \right|^\beta  }}}} \right| \leqslant
C_{\alpha ,\beta ,r,\lambda ,G} \left\| f \right\|_r \left\| g
\right\|_s \;,
\end{equation}
where $ u = (z_1 ,z_2 , \ldots ,z_r ), v = (z'_1 ,z'_2 , \ldots
,z'_r ) \in G. $

\subsection{Main results}
The aim of this paper is to observe existence of optimizers of (1.3). By the dual argument (see [7]),
it is easy to get an alternative version of (1.3):

Let $ 1 < p \leqslant q < \infty ,0 < \lambda  < Q,\alpha  + \beta
\geqslant 0,\alpha  < Q/q,$ and $\beta  < Q/p',$ such that $
\frac{1} {q} = \frac{1} {p} +\frac{{\lambda  + \alpha  + \beta }}
{Q} - 1, $ then
\begin{equation}
\left\| {Sg} \right\|_q  \leqslant C\left\| g \right\|_p ,
\end{equation}
where $p=s, q=r', Sg\left( u \right): = \int_G {\frac{{g\left( v \right)}} {{\left| u
\right|^\alpha  \left| {u^{ - 1} v} \right|^\lambda  \left| v
\right|^\beta  }}dv},C = C_{\alpha ,\beta ,p,\lambda ,G}$ is a
positive constant independent of $g$.

Obviously, if we find an optimizer of (1.4), then we obtain
an optimizer of (1.3). But since (1.3) has the weight function $ |u|^\alpha $ and $
|v|^\beta $, so we should consider the range of $\alpha $ and
$\beta$, which is different from [4].

For $1 < p \leqslant q < \infty ,0 < \lambda  < Q, \alpha  + \beta \geqslant
0 $, we shall assume that

(H1):$\beta  < Q/p',\alpha  \leqslant 0$,

(H2):$ - Q/p < \beta  < Q/p',0 < \alpha  < \min \{ (\frac{{Q -
\lambda }} {Q})(\frac{Q} {{p'}} - \beta ),Q/q\},$ \\such that $ \frac{1} {q} = \frac{1} {p} + \frac{{\lambda  + \alpha  + \beta }}
{Q} - 1 $. Therefore, let us consider the constraint maximum problem, i.e.,
find optimizers of the functional $\left\| {Sf} \right\|_q
$:
\begin{equation}
C_0  = \mathop {\sup }\limits_{\left\| f \right\|_p  = 1} \left\|
{Sf} \right\|_q ,
\end{equation}
under the constraint $ \left\| f \right\|_p  = 1 $.

The main result of this paper is:
\begin{theorem}
Let $ \{ f_j \} $ be maximizing sequence of (1.5), then there exists $\{ u_j \}  \in G $ and $\{ d_j \}
\subset R_ + $ such that the following new maximizing sequence $\{ h_j \}$:
$$h_j (u): = \frac{1} {{d_j^{Q/p} }}f_j (\frac{{u_j u}} {{d_j }}),$$
is relatively compact in $ L^p (G) $, and the limitation of its
convergent subsequence is an optimizer of (1.5).
\end{theorem}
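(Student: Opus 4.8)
The plan is to run a concentration-compactness argument on the mass densities $|f_j|^p\,du$, exploiting the one genuine symmetry of the ratio $\|Sf\|_q/\|f\|_p$. First I would record that the balance condition $\frac1q=\frac1p+\frac{\lambda+\alpha+\beta}{Q}-1$ is precisely what makes this ratio invariant under the anisotropic dilations $f\mapsto f_d:=d^{-Q/p}f(\delta_{1/d}\,\cdot)$: a change of variables, together with $|\delta_d u|=d|u|$ and the fact that $\delta_d$ is a group automorphism, gives $\|f_d\|_p=\|f\|_p$ and $\|Sf_d\|_q=\|Sf\|_q$. Since $S$ is linear, this also yields the scaling law $C_0(m):=\sup_{\|f\|_p^p=m}\|Sf\|_q=m^{1/p}C_0$, which will drive the subadditivity step. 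Because $\|f_j\|_p=1$, the measures $\mu_j=|f_j|^p\,du$ are probability measures, and I would apply the version of Lions' first concentration-compactness principle adapted to $G$ (as in [4,8]) to $\{\mu_j\}$.

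To fix the scale I would use the concentration function $\mathcal{Q}_j(t)=\sup_{u\in G}\int_{B(u,t)}|f_j|^p\,dv$. Since $t\mapsto\mathcal{Q}_j(t)$ is continuous and increases from $0$ to $1$, for a fixed level $\ell\in(0,1)$ there are $d_j>0$ and centres $u_j\in G$ with $\int_{B(u_j,d_j)}|f_j|^p$ close to $\ell$; setting $h_j(u)=d_j^{-Q/p}f_j(u_j u/d_j)$ normalises the concentration at scale one, so that $\sup_{u}\int_{B(u,1)}|h_j|^p\to\ell\in(0,1)$. This normalisation excludes \emph{vanishing} by construction. I would then apply the trichotomy to $|h_j|^p$, leaving only \emph{compactness} and \emph{dichotomy} to analyse.

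Dichotomy I would exclude by strict subadditivity. If $|h_j|^p$ splits into pieces of mass $\theta$ and $1-\theta$ whose supports separate, then writing $h_j\approx h_j^1+h_j^2$ and using linearity $Sh_j=Sh_j^1+Sh_j^2$ together with the decay of the kernel $|u^{-1}v|^{-\lambda}$ to kill the cross terms gives $\|Sh_j\|_q^q\to\|Sh_j^1\|_q^q+\|Sh_j^2\|_q^q$. Inequality (1.4) and the scaling law then bound the right side by $C_0^q(\theta^{q/p}+(1-\theta)^{q/p})$, and since $q>p$ we have $\theta^{q/p}+(1-\theta)^{q/p}<1$ for $0<\theta<1$, contradicting $\|Sh_j\|_q\to C_0>0$. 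Only compactness survives: $\{h_j\}$ is tight and bounded in the reflexive space $L^p(G)$, hence a subsequence converges weakly to some $h$; tightness and the absence of mass loss force $\|h_j\|_p\to\|h\|_p=1$, so by uniform convexity of $L^p$ the convergence is strong. Continuity of $f\mapsto\|Sf\|_q$ afforded by (1.4) then gives $\|Sh\|_q=C_0$, i.e. $h$ is an optimizer of (1.5).

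The main obstacle I expect is the interaction between the recentring by $u_j$ and the weights, which are anchored at the origin. A pure left translation does \emph{not} preserve $\|Sf\|_q/\|f\|_p$, since $|u|^\alpha$ and $|v|^\beta$ break translation invariance, so I must verify that the weighted functional is \emph{asymptotically} unchanged along the rescaled sequence and that the cross-term and vanishing estimates survive the singular factors $|u|^\alpha,|v|^\beta$. This is exactly where the hypotheses (H1)--(H2) enter: the admissible ranges $\alpha\leqslant 0$, or $0<\alpha<(\frac{Q-\lambda}{Q})(\frac{Q}{p'}-\beta)$ together with $-Q/p<\beta<Q/p'$, are what guarantee that the weighted kernel decays fast enough to decouple separated masses and that the concentration cannot be improved by drifting the singularity of $|u|^\alpha$ away from the origin. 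Controlling these weight powers, rather than the (by now standard) unweighted concentration-compactness, is the technical heart of the argument.
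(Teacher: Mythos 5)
Your proposal follows the paper's strategy up to a point (first concentration--compactness trichotomy for $|f_j|^p\,du$, dilation normalization to kill vanishing, strict subadditivity $\theta^{q/p}+(1-\theta)^{q/p}<1$ to kill dichotomy), but it has a fatal gap at the last step. You claim that once only the ``compactness'' alternative survives, tightness plus weak convergence in $L^p$ forces $\|h_j\|_p\to\|h\|_p=1$, whence strong convergence by uniform convexity. This is false: tightness only prevents mass from escaping to spatial infinity; it does not prevent mass from collapsing onto points. A sequence consisting of a fixed profile of mass $\ell$ plus a bump of mass $1-\ell$ concentrating at a nearby point (e.g. $d^{-Q/p}\phi(\delta_{1/d}\,\cdot)$ with $d\to 0$) satisfies the compactness alternative, your scale normalization, and converges weakly to a limit of norm $\ell^{1/p}<1$; such partial concentration is \emph{not} dichotomy, since no spatial separation occurs, so the trichotomy does not exclude it. The paper handles exactly this scenario with two ingredients you have no counterpart for: Lemma 2.5 (the \emph{second} concentration--compactness principle, giving $\nu=|S(f)|^q du+\sum_j C_0^q k_j^{q/p}\delta_{u_j}$ and $\mu\geqslant |f|^p du+\sum_j k_j\delta_{u_j}$), and the Section 3 argument that applies strict subadditivity to the atoms $k_j$ of the limit measure to conclude $\|f\|_p=1$. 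Moreover, Lemma 2.5 needs as input the a.e. convergence $S(f_j)\to S(f)$, which is the paper's Lemma 2.4 --- the longest and hardest step, requiring the truncations $S^\eta$, the five-term decomposition $J_1,\dots,J_5$, and the estimates (2.14)--(2.15) where the hypotheses (H1)/(H2) are actually consumed.

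A second, related problem is that you correctly identify the treatment of the weights $|u|^\alpha,|v|^\beta$ as ``the technical heart'' --- the functional is dilation-invariant but \emph{not} translation-invariant, and the kernel estimates must survive the singular factors --- but you then leave precisely that heart unproven: the cross-term decay in your dichotomy step (which in the paper requires $Q/p'-\lambda-\beta=\alpha-Q/q<0$ and a weighted H\"older estimate), and the local integrability analysis near the origin where $\alpha,\beta$ enter through the exponents $m_1\in(1,Q/\lambda)$, $m_2\in(1,Q/(\lambda+\alpha))$ and the auxiliary exponent $l$, are only gestured at. Announcing that ``(H1)--(H2) are what guarantee'' these facts is not a proof; in the paper those hypotheses are used in specific inequalities ($\alpha m_2 l<Q$, $\lambda m_2 l'<Q$, $\beta+\alpha l<Q/p'$), and without carrying out some version of that computation the argument does not close.
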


We will prove Theorem 1.1 by adjusting the concentration compactness principles on the
Euclidean space by Lions [12, 13] to one on the Carnot group $G$. If $G$ is
replaced with the Heisenberg group $\mathbb{H}^n$, then we have
\begin{corollary}

Let $ 1 < r,s < \infty ,0 < \lambda  < 2n + 2, 0 \leqslant \alpha +
\beta  \leqslant 2n + 2 - \lambda  $ and conditions bellow hold

(H1'):$\beta  < (2n + 2)/s',\alpha  \leqslant 0,$

(H2'):$ - (2n + 2)/s < \beta  < (2n + 2)/s',0 < \alpha  < \min \{
(\frac{{2n + 2 - \lambda }} {{2n + 2}})(\frac{{2n + 2}} {{s'}} -
\beta ),Q/r'\},$ \\such that $\frac{1} {r} + \frac{1} {s} +
\frac{{\lambda  + \alpha  + \beta }} {Q} = 2$,then there exists an
optimizer of the Stein-Weiss inequality on $\mathbb{H}^n$.

\end{corollary}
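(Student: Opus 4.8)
The plan is to obtain the Corollary as the specialization of Theorem 1.1 to the Heisenberg group, so the entire argument is to set up the correct dictionary between the two parameter families and then quote the theorem. First I would recall that $\mathbb{H}^n$ is a Carnot group of step $2$ with $\mathfrak{g} = V_1 \oplus V_2$, $\dim V_1 = 2n$, $\dim V_2 = 1$, so that $m_1 = 2n$, $m_2 = 1$ and the homogeneous dimension is $Q = \sum_{l=1}^{2} l\, m_l = 2n+2$. With this value of $Q$ the dilations, the homogeneous norm, the group law and the Haar measure of Subsection 1.2 reduce to the standard structures on $\mathbb{H}^n$, so that (1.3) and its dual operator form (1.4) are exactly the two Stein-Weiss inequalities on $\mathbb{H}^n$.

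Next I would match exponents. Passing from the symmetric form (1.3) in $(r,s)$ to the operator form (1.4) one sets $p = s$ and $q = r'$; then the balance relation $\frac{1}{r} + \frac{1}{s} + \frac{\lambda+\alpha+\beta}{Q} = 2$ of the Corollary is equivalent to the scaling relation $\frac{1}{q} = \frac{1}{p} + \frac{\lambda+\alpha+\beta}{Q} - 1$ demanded by Theorem 1.1, and the admissible bound $\lambda+\alpha+\beta \le Q = 2n+2$ forces $p \le q$ automatically. Using $p' = s'$, $q = r'$ and $Q = 2n+2$ I would then read off termwise that (H1) and (H2) become precisely (H1') and (H2'): the pair $\beta < Q/p'$, $\alpha \le 0$ turns into $\beta < (2n+2)/s'$, $\alpha \le 0$, while $-Q/p < \beta < Q/p'$ with $0 < \alpha < \min\{ \frac{Q-\lambda}{Q}(\frac{Q}{p'}-\beta), Q/q \}$ turns into $-(2n+2)/s < \beta < (2n+2)/s'$ with $0 < \alpha < \min\{ \frac{2n+2-\lambda}{2n+2}(\frac{2n+2}{s'}-\beta), Q/r' \}$. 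Hence the hypotheses of the Corollary are exactly those of Theorem 1.1 for $G = \mathbb{H}^n$.

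With the dictionary in hand I would apply Theorem 1.1 to the maximization problem (1.5) on $\mathbb{H}^n$. It yields, for any maximizing sequence, a rescaled sequence $h_j(u) = d_j^{-Q/p} f_j(u_j u / d_j)$ that is relatively compact in $L^p(\mathbb{H}^n)$ and whose limit $h$ is an optimizer of (1.5), hence of the operator inequality (1.4). Finally I would invoke the duality remark recorded just before Theorem 1.1: pairing $Sh$ against its $L^{q'}$ dual $f$ (with $q' = r$, so $f \in L^r(\mathbb{H}^n)$) produces a pair $(f,h)$ attaining equality in (1.3) with the sharp constant, which is the asserted optimizer of the Stein-Weiss inequality on $\mathbb{H}^n$.

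Since Theorem 1.1 carries all of the analytic difficulty (the concentration compactness dichotomy and the strict subadditivity controlled by the powers $\alpha,\beta$), the only genuine care needed for the Corollary is the bookkeeping above. The one step I would double-check is the duality transport: I must confirm that the extremal $f$ obtained from $L^q$--$L^{q'}$ duality indeed realizes the sharp constant of (1.3) and lies in $L^r(\mathbb{H}^n)$, so that solving the operator problem (1.4) is genuinely equivalent to optimizing the bilinear inequality (1.3), and not merely a one-sided bound.
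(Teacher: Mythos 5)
Your proposal is correct and takes essentially the same route as the paper: the paper offers no separate proof of Corollary 1.1, presenting it as the immediate specialization of Theorem 1.1 to $G=\mathbb{H}^n$ with homogeneous dimension $Q=2n+2$, which is exactly your dictionary $p=s$, $q=r'$, (H1)$\leftrightarrow$(H1'), (H2)$\leftrightarrow$(H2'). The duality step you flag for double-checking is likewise the observation the paper records before Theorem 1.1 (``if we find an optimizer of (1.4), then we obtain an optimizer of (1.3)''), so your write-up simply makes explicit what the paper treats as immediate.
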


Notice that the condition (H1') in Corollary 1.1 contains the special
case $\alpha  = \beta  = 0 $, so the conclusion in Corollary 1.1 generalizes results in [4].

\section{Concentration compactness principles on Carnot groups}

\subsection{The first concentration compactness principle}

We state a lemma on the Carnot group $G$ which is actually true
in general measure spaces due to Br\'{e}zis and Lieb (see [1]).

\begin{lemma}
Let $0 < p < \infty, \{ f_j \}  \in L^p (G) $ satisfy $ f_j \to
f\;a.e.$, then
$$\mathop {\lim }\limits_{j \to \infty } \int_G {\left| {|f_j (u)|^p
- |f(u) - f_j (u)|^p  - |f(u)|^p } \right|du}  = 0.$$

\end{lemma}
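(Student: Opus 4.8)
The plan is to reproduce the Brézis–Lieb truncation argument, which is purely measure-theoretic and uses no structure of $G$ beyond the fact that $(G, du)$ is a measure space. The starting point is the elementary pointwise inequality: for every $\epsilon > 0$ there is a constant $C_\epsilon > 0$, depending only on $p$ and $\epsilon$, such that for all scalars $a, b$,
$$\left| |a + b|^p - |a|^p \right| \le \epsilon |a|^p + C_\epsilon |b|^p.$$
This follows from homogeneity together with the continuity and relative decay of $t \mapsto |t+1|^p - |t|^p$ against $|t|^p$; for $p \ge 1$ it is also a consequence of the mean value theorem, and for $0 < p < 1$ of subadditivity. The whole argument then rests on converting the a.e.\ convergence $f_j \to f$ into $L^1$ convergence of the difference $|f_j|^p - |f_j - f|^p - |f|^p$ via dominated convergence.

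Next I would note that $f \in L^p(G)$: since $f_j \to f$ a.e.\ and $\{f_j\}$ is bounded in $L^p$ (as in the application, where $\|f_j\|_p = 1$), Fatou's lemma gives $\|f\|_p \le \liminf_j \|f_j\|_p < \infty$, and hence $\{f_j - f\}$ is bounded in $L^p$, say $\sup_j \|f_j - f\|_p^p =: C < \infty$. Applying the elementary inequality with $a = f_j - f$ and $b = f$, so that $a + b = f_j$, I would introduce the truncated function
$$W_j^\epsilon(u) := \left( \left| |f_j(u)|^p - |f_j(u) - f(u)|^p - |f(u)|^p \right| - \epsilon |f_j(u) - f(u)|^p \right)^+.$$
The triangle inequality and the elementary bound give, pointwise, $W_j^\epsilon \le (1 + C_\epsilon)|f|^p$, which is integrable and independent of $j$.

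Since $f_j \to f$ a.e., the quantity inside the positive part above tends to $0$ a.e., so $W_j^\epsilon \to 0$ a.e.; dominated convergence with the integrable majorant $(1 + C_\epsilon)|f|^p$ then yields $\int_G W_j^\epsilon \, du \to 0$. Finally, from the pointwise estimate $\left| |f_j|^p - |f_j - f|^p - |f|^p \right| \le W_j^\epsilon + \epsilon |f_j - f|^p$ I would integrate to obtain
$$\int_G \left| |f_j|^p - |f_j - f|^p - |f|^p \right| du \le \int_G W_j^\epsilon \, du + \epsilon C,$$
take $\limsup_{j \to \infty}$ to discard the first term, and then let $\epsilon \to 0$, concluding that the limit is $0$. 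Since $|f - f_j|^p = |f_j - f|^p$, this is precisely the claimed statement.

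The main obstacle will be establishing the elementary inequality with a clean uniform constant $C_\epsilon$ and, relatedly, securing the uniform $L^p$-boundedness of $\{f_j\}$ needed for the dominating function; once those two ingredients are in place, the dominated convergence step is routine and transfers verbatim from $\mathbb{R}^N$ to the Carnot group $G$, as the lemma's generality over measure spaces makes clear.
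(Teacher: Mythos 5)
Your proof is correct and is essentially the same argument the paper relies on: the paper offers no proof of this lemma at all, merely citing Br\'{e}zis and Lieb [1], and your truncation argument (the elementary inequality $\left||a+b|^p-|a|^p\right|\le\epsilon|a|^p+C_\epsilon|b|^p$, the majorized quantity $W_j^\epsilon$, and dominated convergence) is precisely the classical proof from that reference. You also rightly flag the one subtlety: the conclusion genuinely requires $\{f_j\}$ to be bounded in $L^p(G)$ (so that $f\in L^p$ and $\sup_j\|f_j-f\|_p^p<\infty$), a hypothesis the paper's formulation omits but which holds in its application since $\|f_j\|_p=1$.
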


Let us introduce the first concentration compactness principle on
$G$. The principle on the Heisenberg group was given by Han in [4].
The original version can see Lions [12].

\begin{lemma}
Let $\rho _j  = |f_j |^p du $ be a nonnegative Haar measure on $G$
 with $\int_G {\rho _j }  = 1$, then there exists a subsequence of
 $\{ \rho _j \}$ (still denoted by $ \{ \rho _j \}$) such that one of the following
 holds:

(1)For all $R>0$, we have
$$
\mathop {\lim }\limits_{j \to \infty } \left( {\mathop {\sup
}\limits_{u \in G} \int_{B(u,R)} {\rho _j } } \right) = 0.
$$

(2)There exists $ \{ u_j \}  \subset G $ such that for each $
\varepsilon  > 0 $ small enough, we can find $R_0  > 0 $ with

$$\int_{B(u_j ,R_0 )} {\rho _j }  > 1 - \varepsilon , \forall j \in
N.
$$

(3) There exists $0 < k < 1$ such that for each $ \varepsilon  > 0 $
small enough, we can find $ R_0  > 0 $ and $ \{ u_j \}  \subset G$
such that given any $ R \geqslant R_0 $, there exist $\rho _j^1 $
and $ \rho _j^2 $ satisfying

(a)$\rho _j^1  + \rho _j^2  = \rho _j$,

(b)$supp(\rho _j^2 ) \subset \left( {B(u_j ,R)} \right)^C $,

(c)$\mathop {\lim \sup }\limits_{j \to \infty } \left( {\left| {k -
\int_G {\rho _j^1 } } \right| + \left| {(1 - k) - \int_G {\rho _j^2
} } \right|} \right) \leqslant \varepsilon. $
\end{lemma}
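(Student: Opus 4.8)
The plan is to prove this trichotomy (the ``concentration compactness'' lemma) by the standard Lévy concentration function technique of Lions, adapted to the Carnot group $G$. For each $j$, I define the concentration function
\begin{equation}
Q_j(R) = \mathop{\sup}\limits_{u \in G} \int_{B(u,R)} \rho_j,
\end{equation}
which is nondecreasing in $R$, and satisfies $Q_j(0^+)=0$ and $Q_j(\infty)=\int_G \rho_j = 1$. Since each $Q_j$ is a nondecreasing function bounded by $1$, I would invoke a Helly-type selection argument: passing to a subsequence, $Q_j(R)$ converges pointwise (say, at all rational $R$, then extended) to a nondecreasing limit function $Q(R)$ with values in $[0,1]$. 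Set $k = \lim_{R\to\infty} Q(R) = \sup_R Q(R) \in [0,1]$. The entire proof then splits according to the value of $k$: the case $k=0$ yields alternative (1) (vanishing), the case $k=1$ yields alternative (2) (compactness), and the case $0<k<1$ yields alternative (3) (dichotomy).

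For the vanishing case $k=0$, I note $Q(R)=0$ for every $R$, so $\lim_{j\to\infty} Q_j(R)=0$ for each fixed $R$, which is exactly statement (1). For the compactness case $k=1$, given $\varepsilon>0$ I would first choose $R_0$ with $Q(R_0) > 1-\varepsilon/2$, then use the convergence $Q_j(R_0)\to Q(R_0)$ to find, for $j$ large, a point $u_j$ realizing (approximately) the supremum in $Q_j(R_0)$, so that $\int_{B(u_j,R_0)}\rho_j > 1-\varepsilon$; handling the finitely many small $j$ by enlarging $R_0$ gives statement (2). The genuinely delicate case is dichotomy, $0<k<1$. Here, for fixed small $\varepsilon$, I pick $R_0$ so that $Q(R_0) > k-\varepsilon$, choose $u_j$ nearly attaining $Q_j(R_0)$, and then exploit the fact that $Q(R)$ stays near $k$ as $R$ grows to locate a radius $R_j' \to \infty$ (slowly) with $\int_{B(u_j,2R_j')}\rho_j - \int_{B(u_j,R_0)}\rho_j$ small. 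I then define $\rho_j^1 = \rho_j \,\mathbf{1}_{B(u_j,R_0)}$ and $\rho_j^2 = \rho_j\,\mathbf{1}_{(B(u_j,R_j'))^C}$, discarding the thin annular shell in between, and verify conditions (a)--(c).

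The main obstacle I anticipate is the dichotomy case, and specifically the interplay between the growth of the separating radii and the group geometry. On $\mathbb{R}^N$ Lions uses translation invariance of Lebesgue measure and balls freely; on $G$ the analogous tools are left-translation invariance of Haar measure $du$ and the homogeneity of the pseudo-distance $d(u,v)=|u^{-1}v|$ under the dilations $\delta_r$, so I must be careful that the pseudo-ball $B(u,R)$ behaves well under these operations and that the (quasi-)triangle inequality for the homogeneous norm is used with its correct constant. Concretely, the argument requires splitting $\rho_j$ across two balls of radii $R_0$ and $R_j'$ with $R_j'\to\infty$ and $R_j'-R_0\to\infty$, so that the intermediate shell carries vanishing mass; ensuring conditions (b) (the support of $\rho_j^2$ lies in $(B(u_j,R))^C$ for every $R\ge R_0$) and (c) (the masses converge to $k$ and $1-k$ up to $\varepsilon$) simultaneously is where the careful diagonal choice of $R_j'$ along the subsequence is essential. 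The remaining verifications—monotonicity and boundedness of $Q_j$, the selection of the limit $Q$, and the measurability needed to define $\rho_j^1,\rho_j^2$—are routine once the group-geometric facts are in place.
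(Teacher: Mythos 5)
The paper itself does not prove this lemma (it is explicitly omitted, with a reference to Han [4] and Lions [12]), but the discussion that follows it — the Levy concentration function $P_j$, the limit $k=\lim_{R\to\infty}P(R)$, and the identification of the three cases with $k=0$, $k=1$, $0<k<1$ — shows that your overall strategy (Helly selection applied to $Q_j(R)=\sup_{u\in G}\int_{B(u,R)}\rho_j$, then a trichotomy on $k$) is exactly the intended argument, and your treatment of the cases $k=0$ and $k=1$ is on the right track. However, your dichotomy construction does not prove the statement as written: you set $\rho_j^1=\rho_j\chi_{B(u_j,R_0)}$ and $\rho_j^2=\rho_j\chi_{(B(u_j,R_j'))^C}$ and ``discard the thin annular shell,'' so $\rho_j^1+\rho_j^2$ vanishes on $B(u_j,R_j')\setminus B(u_j,R_0)$ while $\rho_j$ need not, and condition (a) here demands the exact identity $\rho_j^1+\rho_j^2=\rho_j$. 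What your construction proves is Lions' original formulation of dichotomy, where one only requires $\rho_j^1+\rho_j^2\leqslant\rho_j$ together with a support condition on $\rho_j^1$; the present statement instead has equality in (a) and no support condition on $\rho_j^1$, and for it the correct construction is simpler than yours. Given $\varepsilon$, pick $R_0$ with $Q(R_0)>k-\varepsilon/2$ and centers $u_j$ with $\int_{B(u_j,R_0)}\rho_j\geqslant Q_j(R_0)-1/j$; then for every $R\geqslant R_0$ split exactly along the sphere: $\rho_j^1=\rho_j\chi_{B(u_j,R)}$, $\rho_j^2=\rho_j\chi_{(B(u_j,R))^C}$. Conditions (a) and (b) are immediate, and (c) follows because $\liminf_{j\to\infty}\int_G\rho_j^1\geqslant Q(R_0)>k-\varepsilon/2$ (monotonicity plus the near-optimal choice of $u_j$) while $\limsup_{j\to\infty}\int_G\rho_j^1\leqslant\lim_{j\to\infty}Q_j(R)=Q(R)\leqslant k$, and $\int_G\rho_j^2=1-\int_G\rho_j^1$. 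In particular, no escaping radii $R_j'\to\infty$ and no diagonal argument are needed for this version.

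A second, smaller mismatch: in case (2) the statement quantifies $\{u_j\}$ \emph{before} $\varepsilon$, i.e.\ a single sequence of centers must serve every $\varepsilon$, whereas your sketch chooses $u_j$ after $\varepsilon$ is fixed. The standard repair — and the one place where the group geometry you worried about genuinely enters — is to fix $u_j$ once, nearly maximizing $Q_j(R_*)$ for a single radius $R_*$ with $Q_j(R_*)>1/2$ for all $j$ (enlarging $R_*$ to absorb finitely many small $j$); then for any $\varepsilon<1/2$, a ball $B(y_j,R_\varepsilon)$ carrying mass $>1-\varepsilon$ must intersect $B(u_j,R_*)$, since otherwise the two masses would sum to more than $\int_G\rho_j=1$, and the quasi-triangle inequality for the homogeneous norm then gives $B(y_j,R_\varepsilon)\subset B(u_j,C(R_*+R_\varepsilon))$ for a structural constant $C\geqslant1$, which yields (2) with $R_0=C(R_*+R_\varepsilon)$.
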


Its proof is omitted, since it is similar to [4] without any new
difficult except replacing the Heisenberg group $\mathbb{H}^n$ by the Carnot group $G$.

Now let us define the Levy concentration function for $\rho _j $ on
$G$ by
$$
P_j (R) = \mathop {\sup }\limits_{u \in G} \int_{B(u,R)} {\rho _j }
, \;for\; any \;R \in [0,\infty ].$$ \\It is obvious that $ P_j  \in
BV[0,\infty ]$ is nonnegative and non-decreasing with
$$P_j (0) = 0,P_j (\infty ) = 1,\;for \; any\; j \in N. $$
\\Therefore, we can take a nonnegative and non-decreasing function
$P \in BV[0,\infty ] $ such that $P$ is a limit of some subsequence
of $\{ P_j \} $ (still denoted the subsequence by $\{ P_j \} $):
$$\mathop {\lim }\limits_{j \to \infty } P_j (R) = P(R),\;for\;any\; R \in
[0,\infty ).$$
\\Denote
$$k = \mathop {\lim }\limits_{R \to \infty } P(R), $$
\\thus $0 \leqslant k \leqslant 1.$ The case (1) of Lemma 2.2 holds if
$ k = 0 $; the case (2) holds if $k=1$ and the case (3)
holds for $0 < k < 1 $.

Let $ \{ f_j \}$ be a maximizing sequence of (1.5) satisfying $ \left\| {f_j }
\right\|_p  = 1. $ Lemma 2.2 ensures that one of the three cases
must happen. Using dilations in $G$ and choosing $ d_j $ large enough, we
can make a new maximizing sequence (still denoted by $ \{ f_j \}$)
such that
$$P_j (1) = \mathop {\sup }\limits_{u \in G} \int_{B(u)} {\rho _j }  =
\frac{1} {2}. $$
\\Then the case (1) of lemma 2.2 cannot occur.

The following result for $\alpha$ and $\beta$ such that
(1.4) holds is needed.

\begin{lemma}
Let $\{ f_j \}  \subset L^p (G)$ be a maximizing sequence of (1.5)
satisfying $\left\| {f_j } \right\|_p  = 1 ,$ then (3) of Lemma
2.2 cannot occur.
\end{lemma}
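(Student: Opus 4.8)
The plan is to rule out the ``dichotomy'' case (3) of Lemma 2.2 by a standard concentration-compactness splitting argument, showing that if the mass of $\rho_j=|f_j|^p\,du$ were to split into two pieces of sizes $k$ and $1-k$ with $0<k<1$, then the functional $\|Sf_j\|_q$ would be strictly dominated by the sum of the contributions from the two pieces, contradicting subadditivity at the critical exponent. Concretely, in case (3) we obtain a splitting $\rho_j=\rho_j^1+\rho_j^2$ with $\mathrm{supp}(\rho_j^2)\subset(B(u_j,R))^C$, and I would use this to decompose $f_j=f_j^1+f_j^2$ where $f_j^1$ is essentially supported in $B(u_j,R)$ and $f_j^2$ outside $B(u_j,2R)$ (inserting a cutoff and controlling the error on the annulus $B(u_j,2R)\setminus B(u_j,R)$, which carries arbitrarily small $L^p$-mass as $R\to\infty$). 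Then $\|f_j^1\|_p^p\to k$ and $\|f_j^2\|_p^p\to 1-k$.

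First I would record the basic subadditivity relation for $C_0$. Because of the homogeneity built into the constraint $\frac1q=\frac1p+\frac{\lambda+\alpha+\beta}{Q}-1$ and the dilation invariance of the problem, one expects a strict subadditivity inequality of the form
\begin{equation}
C_0(1) > C_0(k) + C_0(1-k),\qquad 0<k<1,
\end{equation}
where $C_0(t)$ is the supremum of $\|Sf\|_q$ under $\|f\|_p^p=t$. Since $p\le q$, scaling $f\mapsto t^{1/p}f$ gives $C_0(t)=t^{1/p}C_0(1)$, and strict subadditivity follows from the strict superadditivity of $t\mapsto t^{1/p}$ when $p>1$ (equivalently $q>p$ produces the strict gap; the borderline $p=q$ must be handled separately, and this is where the hypotheses (H1)/(H2) on $\alpha,\beta$ enter to guarantee a genuine gain). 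I would then estimate $\|S f_j\|_q$ from above by splitting the kernel action of $S$ on $f_j^1$ and $f_j^2$.

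The crux is to bound the cross terms $S f_j^1$ and $S f_j^2$ so that
\begin{equation}
\|S f_j\|_q^q \le \|S f_j^1\|_q^q + \|S f_j^2\|_q^q + o(1),
\end{equation}
and this is where the weights $|u|^\alpha$ and $|v|^\beta$ genuinely matter, unlike the unweighted HLS case in [4]. The kernel $|u|^{-\alpha}|u^{-1}v|^{-\lambda}|v|^{-\beta}$ must be controlled when $u,v$ lie in well-separated regions; using the pseudo-distance triangle inequality on $G$ together with the dilation homogeneity of $|\cdot|$, the interaction between the near and far pieces is small once $R$ is large, precisely because the constraint on $\alpha,\beta$ in (H1)/(H2) keeps the applicable Stein-Weiss inequality (1.4) valid on each piece.

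The main obstacle I anticipate is exactly this weighted cross-term estimate: one must show that the mixed contributions, where $f_j^1$ feeds into $Sf_j$ at points near $(B(u_j,R))^C$ and vice versa, vanish as $R\to\infty$, and the presence of the singular weights $|u|^{-\alpha}$ and $|v|^{-\beta}$ (together with the requirement $\alpha<Q/q$, $\beta<Q/p'$, $\alpha+\beta\ge0$) prevents a naive application of the unweighted argument. Handling the point $\alpha>0$ in (H2), where $|u|^{-\alpha}$ is singular at the origin, requires care about whether the concentration point $u_j$ stays away from or approaches the origin; I would treat this by invoking the upper bound $\alpha<(\frac{Q-\lambda}{Q})(\frac{Q}{p'}-\beta)$ to absorb the singular weight into the Stein-Weiss estimate. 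Once (2.2) and the strict subadditivity (2.1) are established, they are contradictory, since $\|Sf_j\|_q\to C_0(1)$ while the right-hand side is at most $C_0(k)+C_0(1-k)<C_0(1)$; hence case (3) is impossible.
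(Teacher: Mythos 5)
Your overall strategy is the one the paper uses (split $f_j$ into a near piece and a far piece, show the $q$-th power of the functional splits asymptotically, and contradict maximality via strictness of the power function), but the proposal has two genuine problems. First, your ``strict subadditivity'' inequality (1) is false as stated. With $C_0(t)=\sup_{\|f\|_p^p=t}\|Sf\|_q$ and the scaling you correctly note, $C_0(t)=t^{1/p}C_0(1)$; but $t\mapsto t^{1/p}$ is \emph{concave}, hence strictly \emph{sub}additive for $p>1$, so
\[
C_0(k)+C_0(1-k)=\bigl(k^{1/p}+(1-k)^{1/p}\bigr)C_0(1)>C_0(1),
\]
the opposite of what you claim. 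Consequently your closing sentence (``the right-hand side is at most $C_0(k)+C_0(1-k)<C_0(1)$'') rests on a false inequality. The strictness has to be extracted at the level of $q$-th powers, exactly as in the paper: from your inequality (2) and $\|Sf_j^i\|_q\le C_0\|f_j^i\|_p$ one gets $\limsup_j\|Sf_j\|_q^q\le C_0^q\bigl(k^{q/p}+(1-k)^{q/p}\bigr)+O(\varepsilon)$, and $k^{q/p}+(1-k)^{q/p}<1$ precisely because $q/p>1$. Your parenthetical that (H1)/(H2) rescue the borderline case $p=q$ is also off the mark: those hypotheses play no role in this lemma (they are used later, in Lemma 2.4, for the pointwise-convergence estimates); no gap of this kind exists when $q=p$.

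Second, the crux inequality (2) is announced but never proved; you yourself flag the weighted cross-term estimate as the main obstacle and leave it open. The paper does prove the corresponding splitting, identity (2.1), and by a different and cleaner mechanism than cross-term bookkeeping: taking $u_j=0$ and $R=i|u|$, H\"older's inequality applied to $S(f_j\chi_{(B(R))^C})(u)$ gives a pointwise bound of order
\[
\Bigl(\tfrac{i}{i-1}\Bigr)^{\lambda}\,|u|^{-\alpha}\,(i|u|)^{(Q-\lambda p'-\beta p')/p'},
\]
which tends to $0$ as $i\to\infty$ because $Q/p'-\lambda-\beta=\alpha-Q/q<0$ (this is the only point where the weights enter the lemma, via $\alpha<Q/q$); then the Br\'ezis--Lieb lemma (Lemma 2.1) converts the a.e. convergence $S(f_j\chi_{B(R)})\to S(f_j)$ into the additive splitting of $\|S(f_j)\|_q^q$. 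Until you either carry out your cross-term estimate or substitute an argument like this pointwise-tail-plus-Br\'ezis--Lieb one, the proposal is a plan rather than a proof, and with the sign error in (1) corrected it would still need the $q$-th power formulation to reach a contradiction.
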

\begin{proof}
If the case (3) in Lemma 2.2 occurs, then there exist $ 0 < k < 1$
and a subsequence of $ \{ f_j \}$ (still denoted by $ \{ f_j \}$)
such that for each $ \varepsilon  > 0 $, one can find $ R_0  > 0$
and $ \{ u_j \}  \subset G $ such that for any $ R \geqslant R_0 ,$
\begin{eqnarray*}
   &&\left\| {f_j \chi _{B(R)} } \right\|_p^p  = k + O(\varepsilon ),\\
   &&\left\| {f_j \chi _{\left( {B(R)} \right)^C } } \right\|_p^p  = 1 -
k + O(\varepsilon ).
\end{eqnarray*}
Without loss of generality, we may assume $u_j  = 0,j \in N $ since
(1.5) is translation-invariant. For any $u \in G$, choose $ i
\geqslant i(\varepsilon ,|u|) $ such that $ i|u| > R_0 $ and let $ R
= i|u| $. We observe that $|u| \leqslant \frac{1} {i}|v|$ for all $v
\in \left( {B(R)} \right)^C $, then
\begin{equation*}
    |u^{ - 1} v| \geqslant |v| - |u| \geqslant \frac{{i - 1}} {i}|v|.
\end{equation*}
A direct calculation gives
\begin{eqnarray*}
   & & \left| {S(f_j )(u) - S(f_j \chi _{B(R)} )(u)} \right| \\
   &=& \left| {S(f_j \chi _{\left( {B(R)} \right)^C } )(u)} \right|  \\
   &=& \left| {\int_G {\frac{{(f_j \chi _{\left( {B(R)} \right)^C } )(v)}}
{{|u|^\alpha  |u^{ - 1} v|^\lambda  |v|^\beta  }}dv} } \right| \\
  &\leqslant& \left[ {\int_{\left( {B(R)} \right)^C } {|f_j (v)|^p dv} } \right]^{1/p}
  \left[ {\int_{\left( {B(R)} \right)^C } {|u|^{ - \alpha p'} |u^{ - 1} v|^{ - \lambda p'} |v|^{ - \beta p'} dv} } \right]^{1/p'} \\
\end{eqnarray*}
\begin{eqnarray*}
   &\leqslant & C|u|^{ - \alpha } \left( {\frac{i}
{{i - 1}}} \right)^\lambda  \left( {\int_{\left( {B(R)} \right)^C } {|v|^{ - \lambda p' - \beta p'} dv} } \right)^{1/p'}  \\
   & \leqslant & C\left( {\frac{i}
{{i - 1}}} \right)^\lambda  |u|^{ - a} \left( {\int_{i|u|}^\infty  {r^{Q - \lambda p' - \beta p' - 1} dr} } \right)^{1/p'}  \\
   & \leqslant & C\left( {\frac{i}
{{i - 1}}} \right)^\lambda  |u|^{ - a} \left( {\frac{1} {{\lambda p'
+ \beta p' - Q}}} \right)^{1/p'} \left( {i|u|} \right)^{(Q - \lambda
p' - \beta p')/p'} ,
\end{eqnarray*}
in which $C$ depends only on $G$.

Since $ \frac{1} {q} + \frac{1} {{p'}} = \frac{{\lambda  + \beta +
\alpha }} {Q}$, it follows $ Q/p' - \lambda  - \beta  = \alpha  -
\frac{Q} {q} < 0 $ and
$$ S(f_j \chi _{B(R)} )\xrightarrow{{a.e.}}S(f_j ), as \;i \to \infty .$$
\\By applying lemma 2.1, we have
\begin{equation}
\left\| {S(f_j )} \right\|_q^q  = \left\| {S(f_j \chi _{B(R)} )}
\right\|_q^q  + \left\| {S(f_j \chi _{\left( {B(R)} \right)^C } )}
\right\|_q^q  + o(1),\;as\;i \to \infty.
\end{equation}
\\Since $\{ f_j \} $ maximizes (1.5), it implies that the left hand side of (2.1) goes
to $ C_0^q $ as $j \to \infty$ for a large $i$, while the right hand side of (2.1) satisfies
\begin{eqnarray*}
  &&\left\| {S(f_j \chi _{B(R)} )} \right\|_q^q  + \left\| {S(f_j \chi _{\left( {B(R)} \right)^C } )} \right\|_q^q  + o(1) \hfill \\
   &\leqslant& C_0^q \left\| {f_j \chi _{B(R)} } \right\|_p^q  + C_0^q \left\| {f_j \chi _{\left( {B(R)} \right)^C } } \right\|_p^q  + o(1) \hfill \\
   &\leqslant &C_0^q (k + O(\varepsilon ))^{\frac{q}
{p}}  + C_0^q (1 - k + O(\varepsilon ))^{\frac{q} {p}}  + o(1) \hfill \\
   &\leqslant& C_0^q \left( {k^{\frac{q}
{p}}  + (1 - k)^{\frac{q} {p}} } \right) + O(\varepsilon ) +o(1) < C_0^q , \hfill \\
\end{eqnarray*}
which is a contradiction.
\end{proof}

\subsection{The convergent subsequence of maximizing sequence}
Let $ \{ f_j \}$ be a maximizing sequence of (1.5), we see from the
argument in previous subsection that there exists $\{ u_j \} \subset G
$ such that for $R$ large enough
$$ \int_{B(u_j ,R)} {|f_j |^p }  \geqslant 1 - \varepsilon (R). $$
\\Translating $f_j(v)$ into $f_j(u_jv)$, we make a new maximizing sequence
$ \{ f_j \} $ satisfying
\begin{equation}
\int_{B(R)} {|f_j |^p }  \geqslant 1 - \varepsilon (R).
\end{equation}

Now let us prove that we can take a convergent subsequence of the maximizing sequence
of (1.5) by using (2.2).

\begin{lemma}
Let $\{ f_j \}  \subset L^p (G)$ be a maximizing sequence of (1.5)
satisfying $ \left\| {f_j } \right\|_p  = 1 $ and (2.2). Assume that
$f_j  \to f $ weakly in $L^p(G)$, then there exists a subsequence of
$\{ f_j \}$ ( still denoted by $\{f_j\}$ ) such that
\begin{equation}
S(f_j )\xrightarrow{{a.e.}}S(f).
\end{equation}
\end{lemma}
\begin{proof}
We show $S(f_j ) \to S(f)$ in measure to ensure existence
of a point-wisely convergent subsequence of $\{f_j\}$. A direct
computation yields
\begin{eqnarray}
    & &\left\| {S(f_j )\chi _{\left( {B(M)} \right)^C } } \right\|_q  \nonumber\\
   &\leqslant& \left\| {S(f_j \chi _{B(R)} )\chi _{\left( {B(M)} \right)^C } }
   \right\|_q  + \left\| {S(f_j \chi _{\left( {B(R)} \right)^C } )\chi _{\left( {B(M)} \right)^C } } \right\|_q  \nonumber\\
   &\leqslant &\left\| {S(f_j \chi _{B(R)} )\chi _{\left( {B(M)} \right)^C } }
   \right\|_q  + C_0 \left\| {f_j \chi _{\left( {B(R)} \right)^C } } \right\|_p  \nonumber\\
   &\leqslant& \left\| {S(f_j \chi _{B(R)} )\chi _{\left( {B(M)} \right)^C } } \right\|_q  + \varepsilon (R).
\end{eqnarray}
Notice that for $ M > R , v \in B(R),u \in \left( {B(M)} \right)^C$,
it follows $|u^{ - 1} v| \geqslant |u| - R.$ We apply Minkowski's
integral inequality to get
\begin{eqnarray}
  &&\left\| {S(f_j \chi _{B(R)} )\chi _{\left( {B(M)} \right)^C } } \right\|_q  \nonumber\\
   &=& \left( {\int_{|u| > M} {\left| {\int_{|v| \leqslant R} {\frac{{f_j (v)}}
{{|u|^\alpha  |u^{ - 1} v|^\lambda  |v|^\beta  }}dv} } \right|^q du} } \right)^{1/q}  \nonumber\\
   &\leqslant& \left( {\int_{|u| > M} {\frac{1}
{{|u|^{\alpha q} (|u| - R)^{\lambda q} }}du} } \right)^{1/q} \left|
{\int_{|v| \leqslant R} {\frac{{f_j (v)}}
{{|v|^\beta  }}dv} } \right| \nonumber\\
  &\leqslant &\left( {\int_M^{ + \infty } {\frac{{r^{Q - 1 - \alpha q - \lambda q} }}
{{(1 - R/r)^{\lambda q} }}dr} } \right)^{1/q} \left| {\int_{|v|
\leqslant R} {\frac{{f_j (v)}}
{{|v|^\beta  }}dv} } \right| \nonumber\\
 &\leqslant &\frac{C}
{{(1 - R/M)^\lambda  }}\left( {\int_M^{ + \infty } {r^{Q - 1 -
\alpha q - \lambda q} dr} } \right)^{1/q}\centerdot\nonumber
\\&&\left( {\int_{|v| \leqslant R} {|f_j (v)|^{t'} dv} }
\right)^{1/t'} \left( {\int_{|v| \leqslant R} {\frac{1} {{|v|^{\beta
t} }}dv} } \right)^{1/t} ,
\end{eqnarray}
where $t'$ denote the conjugate index of $t$ such that $t' < p $, $t< Q/\beta $.

Since $Q - \alpha q - \lambda q = q(\beta  - Q/p') < 0 $ and $f_j
\in L^p (G) \subset L_{loc}^{t'} (G) $, for every fixed $R$, $
M \gg R$, we have
\begin{equation}
\left\| {S(f_j )\chi _{\left( {B(M)} \right)^C } } \right\|_q
\leqslant \varepsilon (R).
\end{equation}
Similarly, one has
\begin{equation}
\left\| {S(f)\chi _{\left( {B(M)} \right)^C } } \right\|_q \leqslant
\varepsilon (R).
\end{equation}
Thus, for every $k>0$, it implies
\begin{eqnarray}
 &&\left| {\left\{ {|S(f_j ) - S(f)| \geqslant 15k} \right\}} \right| \nonumber\\
   &\leqslant& \left| {\left\{ {|S(f_j ) - S(f_j )\chi _{B(M)} | \geqslant 5k} \right\}} \right| + \nonumber \\
  &&\left| {\left\{ {|S(f_j )\chi _{B(M)}  - S(f)\chi _{B(M)} | \geqslant 5k} \right\}} \right| +  \nonumber\\
  &&\left| {\left\{ {|S(f)\chi _{B(M)}  - S(f)| \geqslant 5k} \right\}} \right| \nonumber\\
 &\leqslant &2\left[ {\frac{{\varepsilon (R)}}
{{5k}}} \right]^q  + \left| {\left\{ {|S(f_j ) - S(f)| \geqslant 5k}
\right\} \cap B(M)} \right|.
\end{eqnarray}

Now the remainder is to estimate $ \left| {\left\{ {|S(f_j ) - S(f)| \geqslant 5k}
\right\} \cap B(M)} \right|$. Noting
\begin{eqnarray}
  &&\left| {\left\{ {|S(f_j ) - S(f)| \geqslant 5k} \right\} \cap B(M)} \right|\nonumber \\
   &\leqslant& \left| {\left\{ {|S(f_j ) - S(f_j \chi _{B(R')} )| \geqslant k} \right\}}
   \right| + \left| {\left\{ {|S(f_j \chi _{B(R')} ) - S^\eta  (f_j \chi _{B(R')} )| \geqslant k} \right\}} \right| + \nonumber \\
 && \left| {\left\{ {|S^\eta  (f_j \chi _{B(R')} ) - S^\eta  (f\chi _{B(R')} )| \geqslant k} \right\} \cap B(M)} \right| + \nonumber \\
 && \left| {\left\{ {|S^\eta  (f\chi _{B(R')} ) - S(f\chi _{B(R')} )|
  \geqslant k} \right\}} \right| + \left| {\left\{ {|S(f\chi _{B(R')} )(u) - S(f)| \geqslant k} \right\}} \right|\nonumber\\
  &:=&J_1+J_2+J_3+J_4+J_5,
\end{eqnarray}
where
$$ S^\eta  (f)(u) = \int_{\left( {B(u,\eta )} \right)^C }
{\frac{{f(v)}} {{|u|^\alpha  |u^{ - 1} v|^\lambda  |v|^\beta  }}dv}
,R' > 0, $$
\\we estimate $ J_1 ,J_2 ,J_3 ,J_4 $ and $J_5$ respectively.

First notice that
\begin{equation}
S^\eta  (f_j \chi _{B(R')} )(u) \to S^\eta  (f\chi _{B(R')}
)(u),\forall u \in G.
\end{equation}
Because $|u|^{ - \alpha } |u^{ - 1} v|^{ - \lambda } |v|^{ - \beta }
\chi _{B(R')} \chi _{\left( {B(u,\eta )} \right)^C }  \in L^{p'}
(G), $ we observe
\begin{equation}
J_3=\left| {\left\{ {\left| {S^\eta  (f_j \chi _{B(R')} ) - S^\eta
(f\chi _{B(R')} )} \right| \geqslant k} \right\} \cap B(M)} \right|
= o(1),\;as\; j \to \infty.
\end{equation}

Since $f_j \to f$ weakly in $L^p(G)$,
we get
$$ \left\| {f\chi _{\left( {B(R')} \right)^C } } \right\|_p^p \leqslant
\mathop {\lim \inf }\limits_{j \to \infty } \left\| {f_j \chi
_{\left( {B(R')} \right)^C } } \right\|_p^p  \leqslant \varepsilon
^p (R'),$$
\\it yields
\begin{eqnarray}
J_1&\leqslant&\frac{1}{k^q}\left\| {S(f_j ) - S(f_j \chi _{B(R')} )} \right\|_q^q  \leqslant
 \frac{C_0}{k^q}\left\| {f_j \chi _{\left( {B(R')} \right)^C } } \right\|_p^q
\leqslant (\frac{ \varepsilon(R')}{k})^q,\\
J_5&\leqslant&\frac{1}{k^q}\left\| {S(f) - S(f\chi _{B(R')} )} \right\|_q^q  \leqslant \frac {C_0}{k^q}
\left\| {f\chi _{\left( {B(R')} \right)^C } } \right\|_p^q  \leqslant
(\frac{\varepsilon (R')}{k})^q.
\end{eqnarray}

To compute $J_2$ and $J_4$, we claim the following two statements:

1)When (H1) holds, there is $ m_1  \in (1,Q/\lambda ) $ such that for
a fixed $R'$,
\begin{equation}
\left\| {S(f_j \chi _{B(R')} ) - S^\eta  (f_j \chi _{B(R')} )}
\right\|_{m_2 }  \leqslant O(\eta ),\;\eta \to 0.
\end{equation}

2)When (H2) holds, there exists $ m_2  \in (1,Q/(\lambda  + \alpha )) $
such that for a fixed $R'$,
\begin{equation}
\left\| {S(f_j \chi _{B(R')} ) - S^\eta  (f_j \chi _{B(R')} )}
\right\|_{m_2 }  \leqslant O(\eta ),\; \eta \to 0.
\end{equation}
We first maintain them and give their proofs latter.
Choosing $ m = \chi _{( - \infty ,0]} (\alpha )m_1  + \chi _{(0, +
\infty )} (\alpha )m_2 , $ one has for the condition (H1) or (H2),
\begin{equation}
J_2 \leqslant \left\| {S(f_j \chi _{B(R')} ) - S^\eta  (f_j \chi _{B(R')} )}
\right\|_m  \leqslant O(\eta ).
\end{equation}
Similarly it follows for $f$,
\begin{equation}
J_4 \leqslant\left\| {S(f\chi _{B(R')} ) - S^\eta  (f\chi _{B(R')} )} \right\|_m
\leqslant O(\eta ).
\end{equation}

Substitute (2.11)-(2.13),(2.16) and (2.17) into (2.9) we have
\begin{eqnarray*}
  &&\left| {\left\{ {|S(f_j ) - S(f)| \geqslant 5k} \right\} \cap B(M)} \right| \\
   &&\leqslant 2\left[ {\frac{{\varepsilon (R')}}
{k}} \right]^q  + 2\left[ {\frac{{O(\eta )}} {k}} \right]^m  + o(1).
\end{eqnarray*}
It shows that $ \{ S(f_j )\} $ is convergent in measure, and then
$\{f_j\}$ is convergent in measure by properly choosing $
\varepsilon ,R,R'$ and $\eta$.

\emph{Proof of (2.14) and (2.15).} Concretely, we need to prove
(2.14) under the condition $ \beta  < Q/p'$ and $\alpha \leqslant
0 $, and (2.15) under the condition $- Q/p < \beta  < Q/p'$ and $0 < \alpha  < \min \{ (\frac{{Q - \lambda }}
{Q})(\frac{Q} {{p'}} - \beta ),Q/q\}$.

To prove (2.14), we choose $ m_1  \in (1,Q/\lambda ) $ and apply Minkowski's
integral inequality,
\begin{eqnarray*}
  &&\left\| {S(f_j \chi _{B(R')} ) - S^\eta  (f_j \chi _{B(R')} )} \right\|_{m_1 }  \\
   &=& \left( {\int_G {\left| {\int_{B(u,\eta )} {\frac{{f_j (v)\chi _{B(R')} (v)}}
{{|u|^\alpha  |u^{ - 1} v|^\lambda  |v|^\beta  }}dv} } \right|^{m_1 } du} } \right)^{1/m_1 }  \\
   &\leqslant& \int_G {\left( {\int_{B(v,\eta )} {\frac{{|f_j (v)\chi _{B(R')} (v)|^{m_1 } }}
{{|u|^{\alpha m_1 } |u^{ - 1} v|^{\lambda m_1 } |v|^{\beta m_1 } }}du} } \right)^{1/m_1 } dv}  \\
   &=& \int_G {\frac{{|f_j (v)\chi _{B(R')} (v)|}}
{{|v|^\beta  }}\left( {\int_{B(v,\eta )} {\frac{1}
{{|u|^{\alpha m_1 } |u^{ - 1} v|^{\lambda m_1 } }}du} } \right)^{1/m_1 } dv}  \\
   &\leqslant& \int_G {\frac{{|f_j (v)\chi _{B(R')} (v)|}}
{{|v|^\beta  }}\left( {\int_{B(\eta )} {\frac{1}
{{(\eta  + R')^{\alpha m_1 } |u|^{\lambda m_1 } }}du} } \right)^{1/m_1 } dv}  \\
   &\leqslant& C(R',p,n)(\eta  + R')^{ - \alpha } \eta ^{(Q - \lambda m_1 )/m_1 } \int_{B(R')} {\frac{{|f_j (v)|}}
{{|v|^\beta  }}dv} ,
\end{eqnarray*}
where $ \beta  < Q/p'$, $ \lambda m_1  < Q $. Therefore for a fixed
$R'$, as $\eta \to 0$, we get (2.14).

As to (2.15), we see
\begin{equation*}
\frac{{Q/p' - \beta }} {{(Q/p' - \beta ) - \alpha }} < \frac{Q}
{\lambda },\;(\lambda  + \alpha ) < Q,
\end{equation*}
and choose $m_2$ such that $ 1 < m_2  < Q/(\lambda  + \alpha ) $ and $ 1
< m_2  < \frac{Q} {\lambda } \cdot \frac{{(Q/p' - \beta ) - \alpha
}} {{Q/p' - \beta }}. $ From $ m_2  < Q/(\lambda  + \alpha ) $ we
get
\begin{equation*}
Q/(Q - \lambda m_2 ) < Q/\alpha m_2 .
\end{equation*}
Notice that $ m_2  < \frac{Q} {\lambda } \cdot \frac{{(Q/p' - \beta
) - \alpha }} {{Q/p' - \beta }} $ implies that
\begin{equation*}
\frac{Q} {{(Q - \lambda m_2 )}} < \frac{1} {\alpha }(\frac{Q} {{p'}}
- \beta ).
\end{equation*}
Thus , we can take $l$ such that $Q/(Q - \lambda m_2 ) < l < \min
\{ Q/\alpha m_2 ,\frac{1} {\alpha }(\frac{Q} {{p'}} - \beta )\}$. It
yields
\begin{equation*}
\alpha m_2 l < Q,\; \lambda m_2 l' < Q,\;\beta  + \alpha l < Q/p',
\end{equation*}
in which $ l' = l/(l - 1).$

Applying Minkowski's integral inequality to get

\begin{eqnarray}
  &&\left\| {S(f_j \chi _{B(R')} ) - S^\eta  (f_j \chi _{B(R')} )} \right\|_{m_2 } \nonumber \\
   &=& \left( {\int_G {\left| {\int_{B(u,\eta )} {\frac{{f_j (v)\chi _{B(R')} (v)}}
{{|u|^\alpha  |u^{ - 1} v|^\lambda  |v|^\beta  }}dv} } \right|^{m_2 } du} } \right)^{1/m_2 } \nonumber \\
   &\leqslant& \int_G {\left( {\int_{B(v,\eta )} {\frac{{|f_j (v)\chi _{B(R')} (v)|^{m_2 } }}
{{|u|^{\alpha m_2 } |u^{ - 1} v|^{\lambda m_2 } |v|^{\beta m_2 } }}du} } \right)^{1/m_2 } dv}  \nonumber\\
  & = &\int_G {\frac{{|f_j (v)\chi _{B(R')} (v)|}}
{{|v|^\beta  }}\left( {\int_{B(v,\eta )} {\frac{1}
{{|u|^{\alpha m_2 } |u^{ - 1} v|^{\lambda m_2 } }}du} } \right)^{1/m_2 } dv}  \nonumber\\
   &\leqslant& \int_G {\frac{{|f_j (v)\chi _{B(R')} (v)|}}
{{|v|^\beta  }}\left( {\int_{B(v,\eta )} {\frac{1} {{l|u|^{\alpha
m_2 l} }} + \frac{1}
{{l'|u^{ - 1} v|^{\lambda m_2 l'} }}du} } \right)^{1/m_2 } dv}  \nonumber\\
   &\leqslant& \int_G {\frac{{|f_j (v)\chi _{B(R')} (v)|}}
{{|v|^\beta  }}\left( {\int_{B(v,\eta )} {\frac{1}
{{l|u|^{\alpha m_2 l} }}du} } \right)^{1/m_2 } dv}  +  \nonumber\\
 && \int_G {\frac{{|f_j (v)\chi _{B(R')} (v)|}}
{{|v|^\beta  }}dv} \left( {\int_{B(\eta )} {\frac{1}
{{l'|u|^{\lambda m_2 l'} }}du} } \right)^{1/m_2 }  \nonumber\\
&: =& I_1  + I_2 .
\end{eqnarray}

To estimate $I_1$, noting $ \alpha m_2 l < Q $, $|u|^{ - \alpha m_2
l}  \in L_{loc} (G) $, and applying Lebesgue's integral theorem, we have that as
$\eta \to 0$,
\begin{equation*}
\frac{1} {{|B(v,\eta )|}}\int_{B(v,\eta )} {\frac{1} {{|u|^{\alpha
m_2 l} }}du}  = \frac{1} {{|v|^{\alpha m_2 l} }} + O(\eta ^{m_2 } ).
\end{equation*}
Thus
\begin{eqnarray*}
  I_1  &\leqslant &\int_G {\frac{{|f_j (v)\chi _{B(R')} (v)|}}
{{|v|^\beta  }}\left( {\frac{{C\eta ^Q }} {{|B(v,\eta
)|}}\int_{B(v,\eta )} {\frac{1}
{{l|u|^{\alpha m_2 l} }}du} } \right)^{1/m_2 } dv}  \\
  & \leqslant& C\frac{{\eta ^{Q/m_2 } }}
{{l^{1/m_2 } }}\int_G {\frac{{|f_j (v)\chi _{B(R')} (v)|}}
{{|v|^\beta  }}\left( {\frac{1} {{|B(v,\eta )|}}\int_{B(v,\eta )}
{\frac{1}
{{|u|^{\alpha m_2 l} }}du} } \right)^{1/m_2 } dv}  \\
   &\leqslant& C\frac{{\eta ^{Q/m_2 } }}
{{l^{1/m_2 } }}\int_G {\frac{{|f_j (v)\chi _{B(R')} (v)|}}
{{|v|^\beta  }}(\frac{1}
{{|v|^{\alpha l} }} + O(\eta ))dv}  \\
   &\leqslant &C\frac{{\eta ^{Q/m_2 } }}
{{l^{1/m_2 } }}\int_{B(R')} {\frac{{|f_j (v)|}} {{|v|^{\beta  +
\alpha l} }}dv}  + O(\eta ).
\end{eqnarray*}
Since $ f_j  \in L^p $, $ \beta  + \alpha l < Q/p'$, it follows
\begin{equation}
I_1  = O(\eta ).
\end{equation}

Since $ \beta  < Q/p' $, $\lambda m_2 l' < Q $, we have that as $\eta \to 0$,
\begin{eqnarray}
I_2  &\leqslant& (\int_{B(R')} {|f_j (v)|^p dv})^{1/p}
(\int_{B(R')}{\frac{1}{{|v|^{\beta{p'}}  }}dv})^{1/p'}
\left( {\int_{B(\eta )} {\frac{1}
{{l'|u|^{\lambda m_2 l'} }}du} } \right)^{1/m_2 }\nonumber \\
&\leqslant &O(\eta ).
\end{eqnarray}

Combining with (2.18)-(2.20), it yields (2.15).
\end{proof}

\subsection{The second concentration compactness principle}
Now we have found a weakly convergent subsequence of the maximizing
sequence of (1.5). To complete the proof of Theorem 1.1, it wants to
prove that this subsequence converges strongly. For the purpose, we
need the second concentration compactness principle on $G$, which is
a special case of known results in measure spaces, see [13]. Here it is a description in $G$.
\begin{lemma}
Let $f_j  \to f$ weakly in $ L^p (G)$, $S(f_j ) \to S(f)$ weakly in
$L^p(G)$. Assume that (2.2) and (2.6) hold, and the nonnegative
measures $\rho _j  \to \mu $ weakly in $L(G)$, $ |S(f_j )|^q du \to
\nu$ in $L(G)$. Then, there exist two at most countable families $
\{ u_j \}  \subset G,\{ k_j \}  \subset (0,\infty ) $ such that
\begin{gather*}
\nu  = |S(f)|^q du + \sum\limits_j {C_0 k_j^{q/p} \delta _{u_j } },\\
\mu  \geqslant |f|^p du + \sum\limits_j {k_j \delta _{u_j } },
\end{gather*}
in which $\delta _{u_j }$ is the Dirac measure at $u_j$.
\end{lemma}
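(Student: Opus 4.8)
The plan is to establish the second concentration compactness principle (Lemma 2.5) by mirroring Lions' original Euclidean argument, adapted to the group structure of $G$. The two weak-$*$ limit measures $\mu$ and $\nu$ exist (after passing to a subsequence) by the weak-$*$ compactness of bounded sequences of measures, since $\rho_j = |f_j|^p\,du$ has total mass $1$ and $|S(f_j)|^q\,du$ has uniformly bounded mass by the Stein-Weiss inequality (1.4). The heart of the matter is the reverse-H\"older-type relation between $\mu$ and $\nu$ that forces the atomic structure. My first step would be to prove, for an arbitrary $\varphi \in C_0(G)$, the inequality
\begin{equation*}
\left\| \varphi\, S(f) \right\|_q^{\,} \;\leqslant\; C_0 \left( \int_G |\varphi|^{\,?}\, d\mu \right)^{1/?},
\end{equation*}
by applying (1.4) to the function $f_j$ localized against $\varphi$ and passing to the limit, using the already-established a.e.\ convergence $S(f_j)\xrightarrow{a.e.}S(f)$ from Lemma 2.4 together with the Br\'ezis--Lieb splitting of Lemma 2.1. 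This yields the global bound $\nu(G) \leqslant C_0\,\mu(G)^{q/p}$ and, localized, the key estimate relating the two measures on every ball.

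Next I would extract the atoms. Setting $\eta = \nu - |S(f)|^q\,du \geqslant 0$ as the "defect" measure, the localized Stein-Weiss bound gives, for any Borel set $E$,
\begin{equation*}
\eta(E)^{1/q} \;\leqslant\; C_0^{1/q}\,\mu(E)^{1/p}.
\end{equation*}
Because $q \geqslant p$ (this is precisely the regime assumed throughout, $1<p\leqslant q<\infty$), this super-linear relation forces $\eta$ to be purely atomic: a standard measure-theoretic lemma (the same one Lions uses) shows that a nonnegative measure dominated in this way by a power strictly greater than one of a finite measure must be supported on an at most countable set of points $\{u_j\}$. Writing $k_j := \mu(\{u_j\})$ and reading off the atomic masses of $\eta$ from the pointwise inequality (which becomes an equality at atoms) gives $\eta = \sum_j C_0 k_j^{q/p}\,\delta_{u_j}$, hence the stated formula for $\nu$. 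The companion lower bound $\mu \geqslant |f|^p\,du + \sum_j k_j\,\delta_{u_j}$ follows from weak lower semicontinuity of the $L^p$ norm applied to $f_j \rightharpoonup f$: the absolutely continuous part of $\mu$ dominates $|f|^p\,du$, and its atomic part accounts for the concentration masses $k_j$.

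\textbf{The main obstacle} I anticipate is not the measure-theoretic extraction of atoms, which is routine once the reverse-H\"older inequality is in hand, but rather the careful justification that the localized Stein-Weiss inequality passes to the weak limit with the \emph{operator norm} $C_0$ as the correct sharp constant. The subtlety is that the operator $S$ is nonlocal and carries the weights $|u|^{-\alpha}$, $|u^{-1}v|^{-\lambda}$, $|v|^{-\beta}$, so cutting off $f_j$ against a test function $\varphi$ supported near a point does \emph{not} simply localize $S(f_j)$; one must control the tail contribution $S(f_j\chi_{(B(R))^C})$, and this is exactly where hypotheses (2.2) and (2.6) enter—(2.6) guarantees $\|S(f_j)\chi_{(B(M))^C}\|_q \leqslant \varepsilon(R)$, so the mass of $\nu$ escaping to infinity is negligible and no atom is lost "at infinity." I would therefore devote the most care to showing that the weak limits $\mu,\nu$ capture all the mass (tightness), invoking (2.2) and (2.6) to rule out vanishing at spatial infinity, and only then apply the reverse-H\"older inequality locally to split the defect into its countable collection of Dirac masses.
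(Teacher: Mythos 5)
First, a point of context: the paper does not prove this lemma at all. It is stated as ``a special case of known results in measure spaces'' with a citation to Lions [13], and no argument is given. Your sketch therefore attempts to reconstruct the proof the paper merely cites, and its skeleton --- a reverse-H\"older relation between the limit measures, Lions' measure-theoretic lemma to extract atoms, and the Br\'ezis--Lieb lemma together with the a.e.\ convergence (2.3) to split off the absolutely continuous parts $|S(f)|^q\,du$ and $|f|^p\,du$ --- is indeed the route Lions' result rests on, so there is no conflict of approach.

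As a proof, however, the proposal has genuine gaps. (i) The central inequality is never actually formulated (you leave its exponents as ``?''), and, more importantly, never proved: you correctly identify that $S$ is nonlocal, so that $\varphi\,S(f_j)$ and $S(\varphi f_j)$ differ, but you then defer this as ``the main obstacle'' rather than resolve it. The workable route is to use linearity, $S(f_j)-S(f)=S(f_j-f)$, apply Lemma 2.1 with (2.3) to identify the defect measures $\bar\nu=\nu-|S(f)|^q du$ and $\bar\mu=\mu-|f|^p du$ as weak limits of $|S(f_j-f)|^q du$ and $|f_j-f|^p du$, apply (1.4) to $\varphi(f_j-f)$, and then prove that the commutator $\varphi\,S(f_j-f)-S(\varphi(f_j-f))$ is negligible in the limit; this last estimate, which must again cope with the weights $|u|^{-\alpha},|v|^{-\beta}$ (hence the dichotomy (H1)/(H2), as in Lemma 2.4), is the actual mathematical content and it is absent from your plan. (ii) Atomicity of the defect measure requires $q>p$ \emph{strictly}: if $q=p$ the relation $\bar\nu(E)\leqslant C\,\bar\mu(E)$ is linear and forces no atomic structure whatsoever, so your assertion that ``$q\geqslant p$ \dots forces $\eta$ to be purely atomic'' is false as stated (and the paper's hypotheses do allow $\lambda+\alpha+\beta=Q$, i.e.\ $q=p$). (iii) Your claim that the pointwise inequality ``becomes an equality at atoms'' is unjustified; Lions' lemma gives only $\nu_j\leqslant C_0^q\,\mu(\{u_j\})^{q/p}$ for the atomic masses $\nu_j$ of $\bar\nu$. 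The equality in the statement is \emph{arranged} by defining $k_j:=(\nu_j/C_0^q)^{p/q}$, whereupon the substantive assertion is the inequality $\mu(\{u_j\})\geqslant k_j$. Relatedly, your constant $C_0^{1/q}$ in the reverse-H\"older inequality is off: applying (1.4) yields the constant $C_0$, hence atoms $C_0^q k_j^{q/p}$, which is the form the paper actually uses in its proof of Theorem 1.1 (the $C_0$ in the lemma's display is evidently a misprint for $C_0^q$).
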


\section{Proof of Theorem 1.1}
\begin{proof}
Let $ \{ f_j \}$ be the maximizing sequence in (1.5) satisfying
(2.2), $ f_j  \to f $ weakly in $ L^p (G) $ and condition (2.3)
hold. We will show $ \left\| f \right\|_p  = 1 $. Let us notice that
$ \mu (G) = 1,\nu (G) = C_0^q$. If $\left\| f \right\|_p^p  = k <
1$, then
\begin{equation*}
\sum\limits_j {k_j }  \leqslant \mu (G) - \left\| f \right\|_p^p  =
1 - k.
\end{equation*}
Therefore,
\begin{eqnarray*}
  \nu (G) &=& \left\| {S(f)} \right\|_q^q  + \sum\limits_j {C_0^q k_j^{q/p} }  \\
   &\leqslant &C_0^q \left\| f \right\|_p^p  + C_0^q (\sum\limits_j {k_j } )^{q/p}  \\
   &\leqslant &C_0^q k^{q/p}  + C_0^q (\sum\limits_j {k_j } )^{q/p}  \\
   &\leqslant &C_0^q k^{q/p}  + C_0^q (1 - k)^{q/p}  \\
   &<& C_0^q ,
\end{eqnarray*}
which contradicts with the fact that $ \nu (G) = C_0^q $ and we
complete the proof of Theorem 1.1.
\end{proof}

\section{Conflict of Interests}
The authors declare that there is no no conflict of
interests regarding the publication of this article.


\end{document}